\documentclass[11pt]{article}
\usepackage{amsmath,amssymb,amsfonts,amsthm}
\usepackage{float}
\numberwithin{equation}{section}
\newtheorem{theorem}{Theorem}[section]
\newtheorem{definition}{Definition}[section]

\newtheorem{proposition}[theorem]{Proposition}
\newtheorem{corollary}[theorem]{Corollary}

\pdfpagewidth 8.5in
\pdfpageheight 11in
\setlength\topmargin{0in}
\setlength\headheight{0in}
\setlength\headsep{0in}
\setlength\textheight{7.7in}
\setlength\textwidth{6.5in}
\setlength\oddsidemargin{0in}
\setlength\evensidemargin{0in}
\setlength\parindent{0.25in}
\setlength\parskip{0.25in} 
\begin{document}
\begin{center}
{\Large{\textbf{Contemplating some invariants of the Jaco Graph, $J_n(1), n \in \Bbb N$}}} 
\end{center}
\vspace{0.5cm}
\large{\centerline{(Johan Kok, Susanth C)\footnote {\textbf {Affiliation of author(s):}\\
\noindent Johan Kok (Tshwane Metropolitan Police Department), City of Tshwane, Republic of South Africa\\
e-mail: kokkiek2@tshwane.gov.za\\ \\
\noindent Susanth C (Department of Mathematics, Vidya Academy of Science and Technology), Thalakkottukara, Thrissur-680501, Republic of India\\
e-mail: susanth\_c@yahoo.com}}
\vspace{0.5cm}
\begin{abstract}
\noindent Kok et.al. $[7]$ introduced Jaco Graphs (\emph{order 1}).  In this essay we present a recursive formula to determine the \emph{independence number} $\alpha(J_n(1)) = |\Bbb I|$ with, $\Bbb I =\{v_{i,j}| v_1 = v_{1,1} \in \Bbb I$ and $v_i = v_{i,j} = v_{(d^+(v_{m,(j-1)}) + m + 1)}\}.$ We also prove that for the Jaco Graph, $J_n(1), n \in \Bbb N$ with the prime Jaconian vertex $v_i$ the chromatic number, $\chi(J_n(1))$ is given by:
\begin{equation*} 
\chi(J_n(1))
\begin{cases}
= (n - i) + 1 , &\text {if and only if the edge $v_iv_n$ exists,}\\ \\ 
= n - i, &\text {otherwise.}
\end{cases}
\end{equation*} 
We further our exploration in respect of \emph{domination numbers, bondage numbers} and declare the concept of the \emph{murtage number}\footnote{In honour of U.S.R. Murty, co-author of [2].} of a simple connected graph $G$, denoted $m(G).$ We conclude by proving that for any Jaco Graph $J_n(1), n \in \Bbb N$ we have that $0 \leq m(J_n(1)) \leq 3.$
\end{abstract}
\noindent {\footnotesize \textbf{Keywords:} Jaco graph, Hope graph, Independence number, Covering number, Chromatic number, Domination number, Bondage number, Murtage number, $d_{om}$-sequence, Compact $\gamma$-set, Murtage partition.}\\ \\
\noindent {\footnotesize \textbf{AMS Classification Numbers:} 05C07, 05C20, 05C38, 05C75, 05C85} 
\section{Introduction} 
Let $\mu(G)$ be an arbitrary invariant of the simple connected graph $G$. The \emph{$\mu$-stability number} of $G$ is conventionally, the minimum number of vertices whose removal changes $\mu(G).$ If the removal of the minimum vertices results in a decrease of the invariant the result is conventionally denoted, $\mu^-(G)$ and if the change is to the contrary the change is denoted $\mu^+(G).$ We note that the \emph{domination number}, $\gamma(G'),$ of a subgraph $G'$ of $G$ can be larger or smaller than $\gamma(G).$ Note that a subgraph may result from the removal of vertices and/or edges from $G$. Furthermore, we note that the removal of edges only  from the graph $G$ to obtain $G'$ can only result in $\gamma(G')\geq \gamma(G).$ The minimum number of edges whose removal from $G$ results in a graph $G'$ with $\gamma(G') > \gamma(G),$ is called the \emph{bondage number} $b(G)$, of $G$.
\section{Some invariants of a Jaco Graph, $J_n(1), n \in \Bbb N$}
The infinite directed Jaco graph (\emph{order 1}) was introduced in $[7],$ and defined by $V(J_\infty(1)) = \{v_i| i \in \Bbb N\}$, $E(J_\infty(1)) \subseteq \{(v_i, v_j)| i, j \in \Bbb N, i< j\}$ and $(v_i,v_ j) \in E(J_\infty(1))$ if and only if $2i - d^-(v_i) \geq j.$ The graph has four fundamental properties which are; $V(J_\infty(1)) = \{v_i|i \in \Bbb N\}$ and, if $v_j$ is the head of an edge (arc) then the tail is always a vertex $v_i, i<j$ and, if $v_k,$ for smallest $k \in \Bbb N$ is a tail vertex then all vertices $v_ \ell, k< \ell<j$ are tails of arcs to $v_j$ and finally, the degree of vertex $k$ is $d(v_k) = k.$ The family of finite directed graphs are those limited to $n \in \Bbb N$ vertices by lobbing off all vertices (and edges arcing to vertices) $v_t, t > n.$ Hence, trivially we have $d(v_i) \leq i$ for $i \in \Bbb N.$
\subsection{Independence number of a Jaco Graph, $J_n(1), n \in \Bbb N$}
Consider the underlying graph of the finite directed Jaco Graph, $J_n(1), n \in \Bbb N.$ Obviously the graph has vertices $v_1, v_2, v_3, ..., v_n.$ Because the independence number is defined to be the number of vertices in a maximum independent set $[1]$, it is optimal to choose non-adjacent vertices recursively, each of minimum indice. This observation leads to the next theorem. Observe that $v_{i,j} =v_i$ as calculated on the \emph{j-th step} of a recursive formula applied to the vertices of a simple connected graph.
\begin{theorem}
The cardinality of the set $\Bbb I =\{v_{i,j}| v_1 = v_{1,1} \in \Bbb I$ and $v_i = v_{i,j} = v_{(d^+(v_{m,(j-1)}) + m + 1)}\},$ derived from the underlying graph of the Jaco Graph $J_n(1), n \in \Bbb N$ is equal to the independence number, $\alpha(J_n(1)).$
\end{theorem}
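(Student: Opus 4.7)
The plan is to interpret the recursive construction as a greedy algorithm and show it produces a maximum independent set. First I would record the structural fact (visible from the defining condition $2i-d^-(v_i)\geq j$) that the out-neighbourhood of $v_m$ in $J_n(1)$ is the consecutive block $\{v_{m+1},v_{m+2},\ldots,v_{m+d^+(v_m)}\}$; equivalently, two vertices $v_i,v_j$ with $i<j$ are adjacent in the underlying graph if and only if $j\leq i+d^+(v_i)$. Writing $f(i):=i+d^+(v_i)$, a key auxiliary fact, inherited from [7] (or otherwise proved by induction on $i$ using $f(i+1)-f(i)=2-(d^-(v_{i+1})-d^-(v_i))$), is that $f$ is non-decreasing in $i$.

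With these tools the proof splits into the two inequalities. For $|\mathbb{I}|\leq\alpha(J_n(1))$ it suffices to check that $\mathbb{I}$ is an independent set. By construction, if $v_m=v_{m,(j-1)}$ is the previously chosen vertex then $v_{i,j}$ has index $f(m)+1$, which lies strictly past the block of out-neighbours of $v_m$, so $v_m$ and $v_{i,j}$ are non-adjacent. For an earlier selection $v_{m'}$ with $m'<m$, monotonicity gives $f(m')\leq f(m)<i$, and again $v_{m'}$ is non-adjacent to $v_{i,j}$; thus $\mathbb{I}$ is independent.

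For the reverse inequality $|\mathbb{I}|\geq\alpha(J_n(1))$ I would use a standard greedy-exchange argument. Let $u_1<u_2<\cdots<u_k$ be the indices of any independent set of $J_n(1)$, and let $g_1<g_2<\cdots$ be the indices produced by the recursion. I will show by induction on $j$ that $g_j\leq u_j$. The base $g_1=1\leq u_1$ is immediate. For the step, since $u_j$ and $u_{j+1}$ are non-adjacent with $u_j<u_{j+1}$, we must have $u_{j+1}\geq f(u_j)+1$; by the inductive hypothesis $u_j\geq g_j$ and by monotonicity $f(u_j)\geq f(g_j)=g_{j+1}-1$, so $u_{j+1}\geq g_{j+1}$. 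Taking $j=k$ forces $g_k\leq u_k\leq n$, so the recursion produces at least $k$ indices before overshooting $n$, giving $|\mathbb{I}|\geq k$. Maximising over all independent sets yields $|\mathbb{I}|\geq\alpha(J_n(1))$.

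The main obstacle is the monotonicity of $f$: this is the structural hinge that makes the greedy choice of smallest available index globally optimal, both when verifying independence (ruling out adjacency to earlier picks) and when running the exchange argument. If one can cite this from [7], the remainder of the proof reduces to the clean induction above; otherwise a short side lemma tracking how $d^-(v_i)$ grows as $i$ increases by one is required before the main argument can be assembled.
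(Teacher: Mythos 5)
Your proposal is correct, but it follows a genuinely different and substantially more complete route than the paper. The paper's proof is an induction on the steps of the recursion: it checks $J_1(1)$ through $J_5(1)$ by hand, assumes the recursively built set is a maximum independent set for all $J_i(1)$ with $k \leq i \leq k + d^+(v_k)$, and then simply asserts that appending $v_{k+d^+(v_k)+1}$ yields a maximum independent set of $J_{k+d^+(v_k)+1}(1)$ --- the crucial maximality claim at each step is stated rather than argued. You instead isolate the two facts that make the greedy choice work: the out-neighbourhood of each $v_i$ is the consecutive block ending at $f(i) = i + d^+(v_i)$ (so the underlying graph is an interval/indifference graph), and $f$ is non-decreasing. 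From these you get independence of $\mathbb{I}$ directly, and the exchange lemma $g_j \leq u_j$ against an arbitrary independent set $u_1 < \cdots < u_k$ delivers maximality --- which is exactly the step the paper leaves unjustified. Your argument also correctly notes that $g_{j+1} = f(g_j)+1 \leq f(u_j)+1 \leq u_{j+1} \leq n$, so the recursion cannot terminate before producing $k$ elements. The one dependency you flag, monotonicity of $f$, is real but benign: in $J_\infty(1)$ one has $d^+(v_i) = i - d^-(v_i)$, so $f(i) = 2i - d^-(v_i)$, and the in-degree sequence increases by at most one per step (a property of the Jaco graph recorded in [7]); truncation to $J_n(1)$ replaces $f$ by $\min(f,n)$, which preserves monotonicity. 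With that side lemma in place your proof is rigorous and, in my view, preferable to the paper's, since it explains \emph{why} the smallest-index greedy rule is optimal rather than asserting it.
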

\begin{proof}
Clearly for $J_1(1)$ the cardinality of $\Bbb I = \{v_1\}$ equals 1 and it is indeed the maximum independent set. It is equally easy to see that the set $\Bbb I = \{v_1\}$ is indeed a maximum independent set of $J_2(1)$ as well. Considering $J_3(1)$ the derived maximum independent set is, $\Bbb I =\{v_1, v_3\}.$ It easily follows that $v_3 = v_{3, 2} = v_{(d^+(v_1) +1 +1)} = v_{(d^+(v_{1,(2-1)}) +1 + 1)}.$ It follows that this maximum independent set (not unique) remains valid for $J_3(1), J_4(1), J_5(1).$ Hence, $\alpha(J_i(1) = 2,$ for $3 \leq i \leq 5.$\\ \\
Assume on the \emph{$\ell$-th step} we have the maximum independent set $\{v_1, v_3, v_6, ..., v_{(d^+(v_{m,(\ell -1)}) + m + 1)}\}$ in respect of the Jaco Graphs $J_i(1)$ for $k=(d^+(v_{m,(\ell -1)}) + m + 1) \leq i \leq k + d^+(v_k).$ \\ \\
Considering the Jaco Graph $J_{(k+d^+(v_k) + 1)}(1)$ will yield a maximum independent set, $\{ v_1, v_3, v_6, ...,\\ v_{(d^+(v_{m,(\ell -1)}) + m + 1)}, v_{(k + d^+(v_k) + 1)}\}.$ So the result holds for the \emph{($\ell$ + 1)-th step}. Through mathematical induction the result holds in general.
\end{proof}
\begin{corollary}
It follows that the \emph{covering number,} $\beta(J_n(1)) = n - \alpha(J_n(1)).$
\end{corollary}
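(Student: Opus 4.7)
The plan is to invoke the classical complementarity between maximum independent sets and minimum vertex covers, often attributed to Gallai, and then apply it to the specific independent set $\Bbb I$ constructed in Theorem 2.1.1. Since $J_n(1)$ is a simple connected graph (in particular with no isolated vertices for $n\geq 2$, and the statement is trivial for $n=1$), the argument reduces to showing that $S\subseteq V(J_n(1))$ is independent if and only if $V(J_n(1))\setminus S$ is a vertex cover.

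The key steps are as follows. First, I would take a maximum independent set $\Bbb I$ of $J_n(1)$, whose cardinality is $\alpha(J_n(1))$ by Theorem 2.1.1. For any edge $v_iv_j\in E(J_n(1))$, at most one endpoint lies in $\Bbb I$, so at least one endpoint lies in $V(J_n(1))\setminus \Bbb I$; hence $V(J_n(1))\setminus \Bbb I$ is a vertex cover, yielding $\beta(J_n(1))\leq n-\alpha(J_n(1))$. Second, I would take a minimum vertex cover $C$ of $J_n(1)$; then no edge has both endpoints in $V(J_n(1))\setminus C$, so $V(J_n(1))\setminus C$ is independent, giving $\alpha(J_n(1))\geq n-\beta(J_n(1))$, i.e. $\beta(J_n(1))\geq n-\alpha(J_n(1))$. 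Combining the two inequalities yields the stated equality.

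There is essentially no obstacle here: the complementation argument is purely set-theoretic and uses nothing special about Jaco graphs beyond the fact that the underlying graph is simple. The only care needed is to phrase the two-way bound cleanly so that the equality $\beta(J_n(1))=n-\alpha(J_n(1))$ is established directly, rather than as an inequality in one direction only.
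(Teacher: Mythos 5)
Your proposal is correct: it is exactly the classical Gallai complementation argument ($S$ is independent iff $V\setminus S$ is a vertex cover, hence $\alpha + \beta = n$), which is the standard fact the paper invokes when it states this corollary without proof. Your two-directional inequality argument cleanly supplies the justification the paper leaves implicit, and nothing specific to Jaco graphs is needed.
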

\subsection{Chromatic number of a Jaco Graph, $J_n(1), n \in \Bbb J$}
From the definitions provided in $[7]$ the Hope Graph of the Jaco Graph, $J_n(1)$ is the complete graph on the vertices $v_{i+1}, v_{i+2}, ..., v_n$ if and only if $v_i$ is the prime Jaconian vertex of $J_n(1).$ Hence, $\Bbb H_n(1) \simeq K_{n- i}.$ The reader is reminded that a \emph{t-colouring} of a graph $G$ is a map $\lambda: V(G) \rightarrow [c]:= \{1,2,3, ..., c, c \geq0\}$ such that $\lambda(u) \neq \lambda(v)$  whenever $u, v \in V(G)$ are adjacent in $G$. The \emph{chromatic number} of $G$ denoted $\chi(G)$ is the minimum $c$ such that $G$ is \emph{c-colourable}. Now the following theorem can be settled.
\begin{theorem}
For the Jaco Graph, $J_n(1), n \in \Bbb N$ with the prime Jaconian vertex $v_i$ we have that the chromatic number, $\chi(J_n(1))$ is given by:
\begin{equation*} 
\chi(J_n(1))
\begin{cases}
= (n - i) + 1 , &\text {if and only if the edge $v_iv_n$ exists,}\\ \\ 
= n - i, &\text {otherwise.}
\end{cases}
\end{equation*} 
\end{theorem}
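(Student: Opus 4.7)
The plan is to establish matching lower and upper bounds on $\chi(J_n(1))$, the lower one coming from an explicit clique and the upper one from a greedy colouring construction.

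For the lower bound, the hypothesis that $v_i$ is the prime Jaconian vertex gives $\{v_{i+1}, v_{i+2}, \ldots, v_n\}$ inducing $K_{n-i}$, whence $\chi(J_n(1)) \geq n-i$. When the edge $v_iv_n$ is present, the Jaco adjacency rule $2i - d^-(v_i) \geq n$, combined with the observation that out-neighbourhoods in a Jaco graph are the contiguous blocks $\{v_{j+1}, \ldots, v_{2j-d^-(v_j)}\}$, forces $v_i$ to be adjacent to every vertex of the Hope graph. Hence $\{v_i, v_{i+1}, \ldots, v_n\}$ induces $K_{n-i+1}$, and so $\chi(J_n(1)) \geq n-i+1$.

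For the upper bound, I would exhibit a proper $c$-colouring with $c = n-i+1$ in the first case and $c = n-i$ in the second. Colour the Hope graph with the $n-i$ distinct colours $\{1, 2, \ldots, n-i\}$ that are in any case forced by its clique structure. Next colour $v_i$: in the first case assign the fresh colour $n-i+1$; in the second case reuse the colour of $v_n$, which is legitimate since $v_iv_n \notin E$ and the out-neighbourhood of $v_i$ is the consecutive block $\{v_{i+1}, \ldots, v_{2i-d^-(v_i)}\}$ lying strictly below $v_n$. Finally, colour $v_{i-1}, v_{i-2}, \ldots, v_1$ greedily in decreasing-index order, each time picking the smallest palette colour not already used on the (contiguous) block of out-neighbours of the current vertex.

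The principal obstacle is verifying that this greedy extension never escapes the palette $\{1, \ldots, c\}$. Since $v_k$'s out-neighbourhood in $J_n(1)$ has size $d^+(v_k)$ with $d^+(v_k) \leq k$, the greedy step succeeds at $v_k$ whenever $d^+(v_k) \leq c-1$. For $k \geq i$ this is a direct computation: $d^+(v_k) = n-k$ for $k > i$, and $d^+(v_i) = n-i$ in the first case or $d^+(v_i) < n-i$ in the second. For $k < i$ one must combine $d^+(v_k) \leq k$ with the defining property of the prime Jaconian vertex to deduce $d^+(v_k) \leq c-1$; this interplay between how small $i$ can be relative to $n$ and how large the early out-degrees can grow is, I expect, the delicate technical heart of the argument. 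Combining the matching lower and upper bounds in each case then yields the stated formula for $\chi(J_n(1))$.
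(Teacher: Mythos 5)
Your overall strategy --- a clique lower bound matched by an explicit greedy colouring in decreasing index order --- is sound and genuinely different from the paper's proof, which instead proceeds by induction on $n$, tracking how the prime Jaconian vertex and the Hope graph evolve when one or two vertices are appended. However, as written your argument has a real gap exactly where you flag it: you never establish that the greedy step at $v_k$ for $k<i$ stays inside the palette, and the bound you offer, $d^+(v_k)\leq k$, is far too weak to do the job. For instance in $J_{10}(1)$ one has $i=6$ and $c=5$, so at $k=5$ you need $d^+(v_5)\leq 4$, while $d^+(v_k)\leq k$ only gives $d^+(v_5)\leq 5$. The missing ingredient is the monotonicity of out-degrees along the vertex ordering of a Jaco graph: in $J_\infty(1)$ one has $d^+(v_{k+1})\in\{d^+(v_k),\, d^+(v_k)+1\}$ (essentially the content of Lemma 1.1 of [7], which the paper's own proof also invokes), and passing to the finite graph $J_n(1)$ can only decrease out-degrees, so $d^+(v_k)\leq d^+(v_i)$ for all $k\leq i$. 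Since $d^+(v_i)=n-i=c-1$ when the edge $v_iv_n$ exists, and $d^+(v_i)=i-d^-(v_i)\leq n-i-1=c-1$ when it does not, every greedy step then sees at most $c-1$ forbidden colours and the colouring closes. Until you supply that lemma, the ``delicate technical heart'' you point to is simply absent.

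Once that is inserted your proof is complete and, frankly, tighter than the paper's: the paper's upper bound in case (a) rests on the assertion that $\chi(J_n(1))>(n-i)+1$ would ``contradict the definition on minimality of the colouring set,'' which is not an argument, and case (b) is left to the reader as an exercise. Your version also delivers the ``if and only if'' cleanly, since the two cases produce the distinct values $(n-i)+1\neq n-i$.
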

\begin{proof}
(a(i)) If the edge $v_iv_n$ exists the largest complete subgraph of $J_n(1)$ is given by $\Bbb H_n(1) + v_i \simeq K_{(n-i) +1}.$ Since it is known that $\chi(K_{(n-i) + 1}) = (n-i) + 1,$ it follows that $\chi(J_n(1)) \geq (n-i) + 1.$
For $J_1(1)$ we have that the prime Jaconian vertex is $v_1$ and inherently connected to itself. One may imagine the imaginary edge $"v_1v_1"$ to find $\chi(J_1(1)) = (1-1) + 1 = 1$ to be true. For $J_2(1)$ the prime Jaconian vertex is $v_1$ and the Hope Graph, $\Bbb H_2(1) \simeq K_1.$ Also, the edge $v_1v_2$, exists. Thus, $\chi(J_2(1)) = (2-1) + 1 = 2,$ which is true.\\ \\
Now assume the result holds for any $J_n(1), n > 2$ for which the edge $v_iv_n$ exists and $v_i$ is the prime Jaconian vertex. Label the $(n-i) + 1$ colours used to colour the vertices $v_i, v_{i+1}, v_{i+2}, ,,, v_n,$ consecutively, $c_i, c_{i+1}, c_{i+2}, ..., c_n.$ From definitions 1.3 and 1.4 and Lemma 1.1 $[7]$ it follows that if the prime Jaconian vertex $v_i$ is unique, the Jaco Graph $J_{n+1}(1)$ will be the \emph{smallest} Jaco Graph \emph{larger} than $J_n(1)$ with prime Jaconian vertex $v_{i+1}$ for which the edge $v_{i+1}v_{n+1}$, exists. It also implies that $\Bbb H_{n+1}(1) \simeq \Bbb H_n(1).$ Since the edge $v_iv_{n+1}$ does not exists, the colouring of $v_{n+1}$ with $c_1$ suffices, whilst the colouring of the rest of the graph $J_{n+1}(1)$ remains the same as that of $J_n(1).$ So clearly the result $\chi(J_{n+1}(1)) = ((n+1)-(i+1)) + 1 = (n-i) +1 = \chi(J_n(1))$ holds.\\ \\
From definitions 1.3 and 1.4 and Lemma 1.1 $[7]$ it follows that if the prime Jaconian vertex $v_i$ of $J_n(1)$ is not unique, the Jaco Graph $J_{n+2}(1)$ will be the \emph{smallest} Jaco Graph \emph{larger} than $J_n(1)$ with prime Jaconian vertex $v_{i+1}$ for which both the edge $v_{i+1}v_{n+1}$ and $v_{i+1}v_{n+2}$, exist (also see the Fisher Table for illustration). Since the edge $v_iv_{n+1}$ does not exist, colour vertices $v_{n+1}, v_{n+2}$ respectively $c_1$ and $c_{n+1}.$ Since $\Bbb H_{n+2}(1)$ has $(n - i) + 1$ vertices we must consider the colouring of $K_{(n-i) + 2}.$ We however, have that $\chi(K_{(n-i) + 2}) = (n-i) + 2 = ((n-i) + 1) + 1 = ((n+1) - i) + 1 = ((n+2) - (i+1)) + 1 = \chi(J_{n+2}(1)).$\\ \\
Assume that for some Jaco Graph $J_n(1)$ with the edge $v_iv_n$ existing we have that $\chi(J_n(1)) > (n-i) +1.$ Clearly this contradicts the definition on minimality of the colouring set so we safely conclude that $\chi(J_n(1)) \ngtr (n-i) +1.$\\ \\
\noindent Since all cases have been considered the necessary condition follows through mathematical induction.\\ \\
(a(ii)) Consider the converse statement namely, if $\chi(J_n(1)) = (n-i) + 1$ then the edge $v_iv_n$ exists and assume it is not true for some Jaco Graph $J_n(1)$ by assuming that the edge $v_iv_n$ does not exists. The Hope Graph $\Bbb H_n(1) \simeq K_{n-i}$ requires $n-i$ colours. Since, the edge $v_iv_n$ does not exists, colouring $v_i$ the same as $v_n$ will suffice. It implies that using $(n-i) + 1$ colours contradicts the definition on minimality of the colouring set. Hence, the sufficient condition follows thus, the result.\\ \\
(b)\footnote {Reader can formalise the proof as an exercise.} The result follows directly from the proof of result (a) and the definition on minimality of the colouring set.
\end{proof}
\subsection{Introduction to the murtage number $m(G)$ of a simple connected graph $G$}
Note that if vertices $u$ and $v$ are not adjacent in $G$, then $\gamma(G + uv) \leq \gamma(G).$ The significance of this concept becomes apparent in the application of domination theory. In a situation where a $\gamma$-set of a graph is to represent costly facilities in a network $N$, it may be preferable to establish additional links (edges) between vertices of $N$ rather than constructing facilities at all vertices of a $\gamma$-set.\\ \\
In order to calculate the murtage number of a graph we introduce the concept of a $d_{om}$-\emph{sequence} of a $\gamma$-set, $X_i$ of a graph. Label the vertices of $X_i$ such that $V(G)$ can be partitioned into sets $D_{1,i}, D_{2,i}, ..., D_{\gamma(G),i}$ such that $D_{j,i}$ contains the vertex $v_j \in X_i$ and vertices in $V(G) - X_i$ which are adjacent to $v_j$ and such that, $|D_{1,i}| \leq |D_{2,i}| \leq ..., \leq |D_{\gamma (G),i}|$ and $|D_{1,i}|$ is a minimum. We define a $d_{om}$-\emph{sequence} of the $\gamma$-set $X_i$ as $(|D_{1,i}|, |D_{2,i}|, ..., |D_{\gamma(G),i}|).$ Clearly a $\gamma$-set can have more than one $d_{om}$-\emph{sequence}. Assume $G$ has $k$ $\gamma$-sets namely $ X_1, X_2, ..., X_k$. Let $\theta = \emph{absolute}(min|D_{1,j}|)$ for some $X_j$. All $\gamma$-sets, $X_\ell$ for which firstly, $|D_{1,\ell}| = \theta$ (\emph{primary condition}) and secondly, $d(v_1, v_i)$ is minimum for all $v_i \in X_{\ell}$ (\emph{secondary condition}) is said to be \emph{compact} $\gamma$-sets. The partitioning described above in respect of a compact $\gamma$-set is called a \emph{murtage partition} of $V(G).$\\ \\
As example let us consider the path $P_4$ with vertices labelled from \emph{left to right} $v_1, v_2, v_3$ and $v_4$. Clearly the $\gamma$-set $\{v_2, v_3\}$ is a $\gamma$-set with the $d_{om}$-sequence = (2,2) and $d(v_2, v_3) = 1.$ The aforesaid set is however not a \emph{compact} $\gamma$-set because the set $\{v_1, v_3\}$ has $d_{om}$-sequence = (1, 3) meaning \emph{absolute}$(min|D_{1, i}|) =1 < 2$ which is primary in the definition. The fact that $d(v_1, v_3) = 2 > 1 = d(v_2, v_3)$ is secondary in the definition. The corresponding murtage partition of $V(P_4))$ is $\{\{v_1\}, \{v_2, v_3, v_4\}\}.$\\ \\
Another example will be considering the path $P_5$ with the vertices labelled \emph{left to right} $v_1, v_2, v_3, v_4$ and $v_5$. Clearly the sets $\{v_1, v_4\}, \{v_2, v_4\}$ are $\gamma$-sets. Both have $d_{om}$-sequence $(2, 3)$ with set $\{v_2, v_4\}$ providing $d(v_2, v_4) = 2$ hence compact, whilst the set $\{v_1, v_4\}$ provides $d(v_1, v_4) = 3$ hence, non-compact. The murtage partion associated with the compact $\gamma$-set $\{v_2, v_4\}$ is $\{\{v_1, v_2\}, \{v_3, v_4, v_5\}\}.$
\begin{definition}
We define the murtage number, $m(G)$, of a simple connected graph $G$ to be the minimum number of edges that has to be added to $G$ such that the resulting graph $G'$ has $\gamma(G') < \gamma(G).$
\end{definition}
\noindent It follows from the definition that $m(G) = 0$ if and only if $\gamma(G) = 1.$
\begin{theorem}
Let $|D_{1,i}| = \theta$ for some compact $\gamma$-set $X_i$ of G, then:
\begin{equation*} 
m(G)
\begin{cases}
= \theta , &\text {if and only if $v_1$ is not adjacent to any $v_j \in X_i,$}\\ \\ 
= \theta-1, &\text {if and only if $v_1$ is adjacent to some $v_j \in X_i$.}
\end{cases}
\end{equation*} 
\end{theorem}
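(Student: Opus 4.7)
The plan is to write $m(G) = \theta - \epsilon$ where $\epsilon = 1$ if $v_1$ has a neighbour in $X_i \setminus \{v_1\}$ and $\epsilon = 0$ otherwise, and to establish matching upper and lower bounds. The structural fact I would exploit throughout is that, by the minimality of $|D_{1,i}|$ in a murtage partition, every $u \in D_{1,i} \setminus \{v_1\}$ must be a \emph{private} neighbour of $v_1$ in $X_i$: if such a $u$ were also adjacent to some $v_k \in X_i$ with $k \neq 1$, one could reassign $u$ to $D_{k,i}$ and shrink $|D_{1,i}|$, contradicting $|D_{1,i}| = \theta$.

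For the upper bound I would construct an explicit edge set. Fix any $v_j \in X_i \setminus \{v_1\}$ and let $E^*$ consist of the edge $uv_j$ for every $u \in D_{1,i} \setminus \{v_1\}$, together with the extra edge $v_1 v_j$ when $\epsilon = 0$; thus $|E^*| = \theta - \epsilon$. Each such edge is genuinely new, because vertices of $D_{1,i} \setminus \{v_1\}$ are private neighbours of $v_1$ and hence non-adjacent to $v_j$ in $G$. I would then verify directly that $X_i \setminus \{v_1\}$ dominates $G + E^*$: each $u \in D_{1,i} \setminus \{v_1\}$ is now dominated via the new edge $uv_j$; the vertex $v_1$ is dominated either by a pre-existing neighbour in $X_i$ (when $\epsilon = 1$) or by the added edge $v_1 v_j$ (when $\epsilon = 0$); and each vertex of $D_{k,i}$ with $k \geq 2$ is already dominated in $G$ by $v_k$. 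Hence $\gamma(G + E^*) \leq \gamma(G) - 1 < \gamma(G)$, giving $m(G) \leq \theta - \epsilon$.

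For the lower bound, suppose towards contradiction that $E'$ is an edge set with $|E'| < \theta - \epsilon$ and $\gamma(G + E') < \gamma(G)$, and choose a dominating set $Y$ of $G + E'$ with $|Y| = \gamma(G) - 1$. The set $U := V(G) \setminus N_G[Y]$ is non-empty (since $\gamma(G) > |Y|$), and each $u \in U$ forces a newly added edge joining $u$ to $Y$, giving $|E'| \geq |U|$. The crux is to establish $|U| \geq \theta - \epsilon$. If there exists a vertex $v$ of $G$ dominating $U$, then $X := Y \cup \{v\}$ is a $\gamma$-set of $G$; the murtage partition of $X$ forces $|D_v| \geq \theta$ by the extremality of $\theta$, and the private-neighbour observation identifies $|U|$ with $|D_v|$ when $v$ has no neighbour in $Y$ and with $|D_v| - 1$ when $v$ has at least one, yielding the required bound once one knows that the $\epsilon$-parity of the auxiliary $\gamma$-set $X$ must agree with that of the original compact $\gamma$-set $X_i$.

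The main obstacle is precisely this last step: handling the situation in which no single vertex of $G$ dominates all of $U$ (so that $Y$ does not extend to a $\gamma$-set by adjoining just one vertex), and certifying that compactness forces the $\epsilon$-parity to be consistent across all $\gamma$-sets achieving $|D_1| = \theta$. Both subtleties seem to demand a swap / exchange argument invoking the secondary distance condition in the definition of a compact $\gamma$-set; without it, the elementary extremality argument only yields $|U| \geq \theta - 1$ in case~1 rather than $|U| \geq \theta$, which would be insufficient to close the proof.
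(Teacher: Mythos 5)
Your upper bound is complete and correct: the private--neighbour observation (by minimality of $|D_{1,i}|$, every $u\in D_{1,i}\setminus\{v_1\}$ is dominated by no member of $X_i$ other than $v_1$), the explicit edge set $E^*$ of size $\theta-\epsilon$, and the verification that $X_i\setminus\{v_1\}$ dominates $G+E^*$ together give $m(G)\le\theta-\epsilon$. This is a cleaner, fully justified version of the construction the paper gestures at. The genuine gap is the one you name yourself: the lower bound $m(G)\ge\theta-\epsilon$ is never established. Your reduction ($|E'|\ge|U|$ with $U=V(G)\setminus N_G[Y]$ for a deficient dominating set $Y$ of $G+E'$) is the right starting point, but the two obstacles you isolate are real. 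First, nothing forces $Y$ to extend to a $\gamma$-set of $G$ by adjoining a single vertex, so the comparison with $\theta$ cannot simply be routed through the definition of a compact $\gamma$-set. Second, even when such an extension $X=Y\cup\{v\}$ exists, the extremality of $\theta$ only yields $|U|=|D_v|-1\ge\theta-1$ when $v$ has a neighbour in $Y$; upgrading this to $\theta$ in the case $\epsilon=0$ requires arguing that a $\theta$-achieving $\gamma$-set whose minimum-part representative is adjacent to another member would, via the secondary (distance) clause, disqualify $X_i$ from being compact --- an exchange argument you sketch but do not carry out, and which is delicate because that clause is itself only loosely specified in the paper.

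You should not assume the paper resolves what you could not: its proof of this direction consists of the assertion that ``we must join all vertices in $D_{1,i}$ to vertices in $X_i-\{v_1\}$ in order to eliminate $v_1$ from $X_i$,'' which presupposes that the only way to lower the domination number by adding edges is to convert the fixed set $X_i\setminus\{v_1\}$ into a dominating set. It never rules out an edge set that produces an entirely different dominating set of size $\gamma(G)-1$, which is precisely the situation your $U$-argument is designed to confront. So your proposal is an honest partial proof whose missing half coincides with the part the paper also leaves unproved; to finish it you would need to close the two cases above (or exhibit a counterexample showing the stated equality can fail for some reading of ``compact'').
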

\begin{proof}
(a) Assume $v_1$ is not adjacent to any $v_j \in X_i.$ Since we are considering a $d_{om}$-sequence of a compact $\gamma$-set of $G$, it is clear that the vertices in $D_{1,i}$ are uniquely dominated by $v_1$ hence, we must join all vertices in $D_{1,i}$ to vertices in $X_i - \{v_1\}$ in order to eliminate $v_1$ from $X_i$. Since, $|D_{1,i}| = \theta$ is an absolute minimum over all minimum number of edges to be added to have a resulting graph $G'$ such that $\gamma(G') =\gamma(G) - 1 < \gamma(G),$ it follows from the definition that $m(G) = \theta.$\\ \\
Conversely we assume that $m(G) = \theta$ and that $v_1$ is adjacent to some $v_j \in X_i.$ Since we are considering a $d_{om}$-sequence of a compact $\gamma$-set of $G$, it is clear that the vertices in $D_{1,i}$ are uniquely dominated by $v_1$ hence, we must join all vertices in $D_{1,i} - \{v_1\}$ to vertices in $X_i - \{v_1\}$ in order to eliminate $v_1$ from $X_i$. However, it required only $\theta - 1$ edges to be added hence, $m(G) = \theta -1.$ The latter is a contradiction, implying $v_1$ is not adjacent to any vertex $v_j \in X_i.$\\ \\
(b) The proof follows in a similar way as part (a).
\end{proof}
\begin{proposition}
For any graph $G$ for which $m(G) \geq 1$ we have that $m(G) = \gamma^-(G).$
\end{proposition}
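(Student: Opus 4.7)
The plan is to prove the two inequalities $\gamma^{-}(G) \le m(G)$ and $m(G) \le \gamma^{-}(G)$ separately, invoking Theorem 2.3 for one direction and a direct edge-addition construction for the other.

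For $\gamma^{-}(G) \le m(G)$, I would fix a compact $\gamma$-set $X_i$ realising $|D_{1,i}| = \theta$, so that Theorem 2.3 gives $m(G) \in \{\theta, \theta - 1\}$ according as $v_1$ is adjacent to some member of $X_i$ or not. When $m(G) = \theta$ (no such neighbour), I delete the whole block $D_{1,i}$; since the murtage partition is a genuine partition of $V(G)$, every remaining non-$X_i$ vertex is still covered by its assigned dominator $v_j \ne v_1$, so $X_i \setminus \{v_1\}$ dominates everything that remains. When $m(G) = \theta - 1$, I delete only $D_{1,i} \setminus \{v_1\}$: the surviving vertex $v_1$ is now dominated through its hypothesised neighbour $v_j \in X_i$, and the other blocks are covered as before. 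In either case $\gamma$ strictly drops by at least $1$, which gives $\gamma^{-}(G) \le m(G)$.

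For the reverse inequality, let $S$ realise the minimum $\gamma^{-}(G)$, and choose a $\gamma$-set $Y$ of $G - S$, so that $|Y| \le \gamma(G) - 1$. Viewed as a subset of $V(G)$, $Y$ already dominates $V(G) \setminus S$ in $G$; the only vertices possibly undominated are those $u \in S$ whose closed neighbourhood in $G$ is disjoint from $Y$. For each such $u$ I add one edge from $u$ to an arbitrary member of $Y$, incurring at most $|S|$ new edges. In the resulting graph $G'$ the set $Y$ is dominating of size at most $\gamma(G) - 1$, so $\gamma(G') < \gamma(G)$ and hence $m(G) \le |S| = \gamma^{-}(G)$.

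The only delicate point is translating Theorem 2.3's algebraic value of $m(G)$ into a concrete vertex-deletion set and, in particular, reading off from which branch of that theorem one is in, so as to decide whether $v_1$ itself must also be removed. Once this bookkeeping is straight, the bounds close immediately; note that the secondary distance condition in the definition of compactness plays no role in the argument — only the primary minimality $|D_{1,i}| = \theta$ is used.
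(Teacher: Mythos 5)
Your proof is correct and follows the same overall strategy as the paper: one inequality by deleting the smallest block of the murtage partition, the other by adding edges from the removed vertex set to a dominating set of the reduced graph. Two of your refinements are worth noting, because the paper's own write-up is looser on exactly these points. First, the paper asserts flatly that $m(G)=|D_{1,i}|=\theta$ and deletes all of $D_{1,i}$, which only yields $\gamma^-(G)\leq\theta$; when Theorem 2.3 puts us in the branch $m(G)=\theta-1$, that bound is off by one, and your case split (deleting $D_{1,i}$ versus $D_{1,i}\setminus\{v_1\}$ according to whether $v_1$ has a neighbour in $X_i$) is what actually closes the inequality $\gamma^-(G)\leq m(G)$. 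Second, in the reverse direction the paper claims that $\gamma(G-Y)<\gamma(G)$ forces some $X_i-\{v_j\}$ to dominate $G-Y$, which does not follow, since the smaller dominating set of $G-Y$ need not be related to $X_i$ at all; your version, which takes an arbitrary $\gamma$-set $Y$ of $G-S$ and joins each still-undominated vertex of $S$ to $Y$ with at most $|S|$ new edges, avoids this unjustified step entirely. So while the architecture is the same, your proof is the more rigorous of the two.
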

\begin{proof}
Since $m(G) \geq 1$ it follows that $\gamma(G) \geq 2.$ Consider any compact $\gamma$-set $X_i$ of $G$. From the definition it follows that $m(G) = |D_{1,i}| = \theta.$ If $\gamma^-(G) = k < \theta$, let $Y \subseteq V(G)$ be a $\gamma^-$-set of $G$ with $|Y| = k.$ Since $\gamma(G - Y) < \gamma(G)$ there exists at least one vertex $v_j \in X_i$ such that every vertex of $G - (Y\cup X_i) \cup \{v_j\}$ is joined to a vertex in $X_i - \{v_j\}.$ Join every vertex in $Y$ to a vertex $v_t \in X_i, v_t \neq v_j$ to obtain $G'$. Clearly $\gamma(G') < \gamma(G)$ and it follows that $m(G) \leq k < \theta$, which is a contradiction.\\ \\
If $\theta < |Y| = \gamma^-(G)$ we consider the graph $G - D_{1,i}$ which has $\gamma$-set, $X_i - \{v_1\}.$ Since $\gamma(G - D_{1,i}) < \gamma(G)$ we have that $\gamma^-(G) \leq \theta < |Y|$ which renders a contradiction.\\ \\
Hence $m(G) = \gamma^-(G).$
\end{proof}
\noindent Although the two invariants differ conceptually, the result is very useful. We only have to investigate one of the invariants and all the results will hold for the other.
\begin{theorem}
Any simple connected graph $G$ has a spanning subtree $T$ such that:\\ $\Delta(T) = \Delta (G), \gamma(T) = \gamma(G)$ and $m(T) = m(G).$
\end{theorem}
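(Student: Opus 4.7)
The plan is to construct $T$ explicitly around two structural anchors of $G$: a vertex $v^{*}$ of maximum degree, and a compact $\gamma$-set $X_i=\{v_1,\ldots,v_{\gamma(G)}\}$ with its murtage partition $D_{1,i},\ldots,D_{\gamma(G),i}$, where $|D_{1,i}|=\theta$. Proposition 2.4 reduces the murtage part of the claim to proving $\gamma^-(T)=\gamma^-(G)$, so the construction must preserve $\Delta$, $\gamma$ and $\gamma^-$ simultaneously. The degenerate case $\gamma(G)=1$ is dispatched immediately: take $T$ to be a star at the universal vertex of $G$, which is also of maximum degree, so that $\gamma(T)=1$ and $m(T)=0=m(G)$.

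Assuming $\gamma(G)\ge 2$, I would seed a subgraph $H\subseteq G$ with two families of edges: $(i)$ every $G$-edge incident with $v^{*}$, and $(ii)$ for each $j$ and each $u\in D_{j,i}\setminus\{v_j\}$, one witness edge $uv_j$, which exists by definition of the murtage partition. Family $(ii)$ is a spanning star-forest, and any cycle that $H$ can contain is forced to be a $4$-cycle of the form $v^{*}uv_ju'v^{*}$ with $u,u'\in N_G(v^{*})$ both lying in the same $D_{j,i}$. For every such cycle I would delete the witness edge $u'v_j$ and reassign $u'$ to the part of $X_i$ containing $v^{*}$; this is legitimate because $u'\in N_G(v^{*})$. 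The resulting $H$ is acyclic, and since $G$ is connected I would complete it to a spanning tree $T$ by adding further $G$-edges between its components, chosen among edges not incident with $v^{*}$.

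Verification then runs invariant by invariant. Preservation of $\Delta$ is immediate, since $v^{*}$ retains all its $G$-neighbors in $T$ and no vertex of $T$ can exceed its $G$-degree. For $\gamma$, every $u\notin X_i$ still has a tree-edge to a vertex of $X_i$ (its original $v_j$, or $v^{*}$ after reassignment), so $X_i$ dominates $T$ and $\gamma(T)\le\gamma(G)$, while $\gamma(T)\ge\gamma(G)$ by the standard monotonicity of $\gamma$ under edge deletion. For the murtage equality, $\gamma^-(T)\ge\gamma^-(G)$ is automatic for spanning subgraphs with $\gamma(T)=\gamma(G)$, since $\gamma(T-Y)\ge\gamma(G-Y)\ge\gamma(G)=\gamma(T)$ for any $Y$ with $|Y|<\gamma^-(G)$. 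Conversely, deleting the $\theta$ vertices of $D_{1,i}$ from $T$ leaves $X_i\setminus\{v_1\}$ as a dominating set, because the retained witness edges inside each $D_{j,i}$ with $j\ge 2$ still join every surviving non-$X_i$ vertex to its $v_j$; hence $\gamma(T-D_{1,i})\le\gamma(G)-1<\gamma(T)$, giving $\gamma^-(T)\le\theta$, and Proposition 2.4 then delivers $m(T)=m(G)$.

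The step I expect to be hardest is the cycle-breaking surgery. One has to verify that each cycle of $H$ really has a witness edge whose endpoint admits $v^{*}$ as an alternative dominator, and that the cumulative reassignments do not violate the primary compactness condition $|D_{1,i}|=\theta$. The rigid $4$-cycle structure of $H$ handles the first point, and the compactness of $X_i$ handles the second, because the reassignments only move vertices into the part containing $v^{*}$ (or into the part of the unique $X_i$-dominator of $v^{*}$ when $v^{*}\notin X_i$) and never into $D_{1,i}$. If no compact $\gamma$-set containing $v^{*}$ exists at the outset, a preliminary local exchange on $X_i$ produces one before $H$ is seeded; showing that this exchange is always available is, I expect, the subtlest point of the whole argument.
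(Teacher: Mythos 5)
Your construction follows the same blueprint as the paper's proof: seed a spanning star forest from the murtage partition of a compact $\gamma$-set, connect it into a tree, and observe that $X_i$ remains a (compact) $\gamma$-set of the result. The two refinements you add are sensible ones --- you anchor the maximum degree at an explicit vertex $v^{*}$ by retaining all of its edges (where the paper instead hopes the centre of the largest star attains $\Delta(G)$), and you route $m(T)=m(G)$ through Proposition 2.4 via $\gamma^-$ rather than asserting directly that $X_i$ stays compact. The monotonicity arguments ($\gamma(T)\geq\gamma(G)$, $\gamma^-(T)\geq\gamma^-(G)$, and $\gamma(T-D_{1,i})<\gamma(T)$) are fine. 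But the load-bearing step is exactly the one you flag at the end, and it does not go through as described.

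The cycle-breaking surgery is only valid when $v^{*}\in X_i$. When you delete the witness edge $u'v_j$, the only $H$-edge remaining at $u'$ is $u'v^{*}$; if $v^{*}\notin X_i$, then $u'$ is at distance $2$ in $T$ from every vertex of $X_i$, so reassigning $u'$ ``to the part of the unique $X_i$-dominator of $v^{*}$'' does not make $X_i$ dominate $T$, and the inequality $\gamma(T)\leq\gamma(G)$ loses its justification. The proposed escape --- a preliminary exchange producing a compact $\gamma$-set that contains $v^{*}$ --- is not available in general: there are graphs whose unique maximum-degree vertex lies in \emph{no} minimum dominating set at all. Take the spider obtained by subdividing every edge of $K_{1,3}$: the centre is the unique vertex of degree $3=\Delta$, yet the only $\gamma$-set is the set of three subdivision vertices, since the centre together with any two further vertices dominates at most two of the three leaves. (Incidentally, $H$ can also contain triangles $v^{*}uv_jv^{*}$ when $v_j\in N(v^{*})$, not only $4$-cycles, though the same surgery would apply.) To close the gap you must either show that every stranded $u'$ has an alternative $X_i$-neighbour whose witness edge can be substituted without recreating a cycle, or give up on retaining the full star at $v^{*}$ and control $\Delta(T)$ some other way; as written, $\gamma(T)=\gamma(G)$ is not established in the case $v^{*}\notin X_i$.
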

\begin{proof}
Consider a compact $\gamma$-set, $X_i = \{v_1, v_2, v_3, ..., v_{\gamma(G)}\}$ of $G$ and an associated \emph{murtage partitioning} of $V(G).$ Consider the forest $\cup\langle D_{j,i}\rangle_{\forall j}$ with $\langle D_{j,i}\rangle$ the star with edges $\{v_jv_k| v_k \in D_{j,i}\}.$\\ \\
If in $\langle D_{\gamma(G),i}\rangle$ we have $d(v_{\gamma(G)}) = \Delta(G)$, then join all $\langle D_{j,i}\rangle, j = 1, 2, ..., (\gamma(G)-1)$ to $\langle D_{\gamma(G),i}\rangle$ with one edge $uv$ if and only if $u \in D_{\gamma(G),i}, v \in D_{j,i}$ and $uv \in E(G).$ Label the tree $T^*$. If any of the stars $\langle D_{j,i}\rangle$ has not been joined to $\langle D_{\gamma(G),i}\rangle$ we join them to $T^*$ with one edge $uv$ if and only if $u \in V(T^*), v \in D_{j,i}$ and $uv \in E(G).$ Label this successor tree $T^*.$ Since $G$ is connected it is evident that recursively all stars will eventually be connected. Clearly $\Delta(T) = \Delta(G).$\\ \\
If in $\langle D_{\gamma(G),i}\rangle$ we have $d(v_{\gamma(G)}) < \Delta(G),$  join all $\langle D_{j,i}\rangle, j = 1, 2, ..., (\gamma(G) - 1)$ to $\langle D_{\gamma(G),i}\rangle$ with one edge $uv_{\gamma(G)}$ if and only if $u \in D_{j,i}$ and $uv_{\gamma(G)} \in E(G).$ Label the tree $T^*$. Note that $\Delta(T^*) = \Delta(G).$ All other stars $\langle D_{j,i}\rangle$ which have not been joined at this first iteration can recursively be joined as described above. Hence, in all cases a spanning subtree $T$ can be constructed with $\Delta(T) = \Delta(G).$\\ \\
To complete the proof we note that $\gamma(G) \leq \gamma(T)$ and the set $X_i$ is a $\gamma$-set of $T$, hence $\gamma(T) = \gamma(G).$ It is also clear that $X_i$ is a compact $\gamma$-set of $T$ hence, $m(T) = m(G).$
\end{proof}
\noindent Furthermore, let $\Bbb G = \{G_1, G_2, G_3, ..., G_\ell\}$ with each $G_i,$ a simple connected graph. It follows easily that $\gamma(\cup_{\forall i}G_i) =\sum_{\forall i}\gamma(G_i)$ and similarly, $m(\cup_{\forall i}G_i) =\sum_{\forall i}m(G_i).$ Also if $\gamma(G_i) \leq \gamma(H_i), i = 1,2,3, ..., n$ then $\gamma(\cup_{\forall i}G_i) = \sum_{\forall i}\gamma(G_i) \leq \sum_{\forall i}\gamma(H_i) = \gamma(\cup_{\forall i}H_i).$
\subsection{Murtage number of a Jaco Graph, $J_n(1), n \in \Bbb N$}
In this subsection, reference to a Jaco Graph will mean we consider the undirected underlying graph of the Jaco Graph. Hence we \emph{peel off} the orientation of the Jaco Graph. From the definition of a Jaco Graph it follows that all Jaco Graphs on $n \geq 2$ has at least one leaf (vertex with degree = 1).  Hence, the \emph{bondage number} is $b(J_n(1))_{n \geq 2} = 1.$ \\ \\
The fact that $m(J_n(1))_{n\in \Bbb N} \geq 0$ follows from the definition.\\ \\
From the definition of a Jaco Graph it follows easily that vertex $v_1$ dominates $J_1(1)$ and $J_2(1)$ and vertex $v_2$ dominates $J_3(1)$ hence, $m(J_1(1)) = m(J_2(1)) = m(J_3(1)) = 0.$\\ \\
For $J_4(1)$ and $J_5(1)$  it follows that the set $\{v_1, v_3\}$ is a compact $\gamma$-set with the $d_{om}$-\emph{sequences}, $(1, 2)$ and $(1, 3)$ hence, $m(J_4(1)) = m(J_5(1)) =1.$ \\ \\
For the Jaco Graphs $J_6(1)$ and $J_7(1)$ we have  sets $\{v_1, v_4\}, \{v_1, v_5\}, \{v_2, v_4\}, \{v_2, v_5\}, \{v_2, v_6\},\\ \{v_2, v_7\}$ being $\gamma$-sets with only $\{v_2, v_4\}$ and $\{v_2, v_5\}$ the compact $\gamma$-sets. The corresponding $d_{om}$-\emph{sequences} are $(2, 4)$ and $(2, 5)$ hence, $m(J_6(1)) = m(J_7(1)) = 2.$ For $J_8(1)$ we have that the sets $\{v_2, v_5\}, \{v_2, v_6\}, \{v_2, v_7\}$ are $\gamma$-sets with $\{v_2, v_5\}$ the unique compact $\gamma$-set. The unique corresponding $d_{om}$-\emph{sequence} is (2, 6) so, $m(J_8(1)) = 2.$\\ \\
In respect of $J_9(1)$ and $J_{10}(1)$ we make the interesting observation that exactly two $\gamma$-sets, both being compact $\gamma$-sets namely, $\{v_2, v_6\}$ and $\{v_2, v_7\}$, exist. The corresponding $d_{om}$-\emph{sequences} are (3, 6) and (3, 7) respectively, meaning, $m(J_9(1)) = m(J_{10}(1)) = 3$.\\ \\
In the case of $J_{11}(1)$ an unique compact $\gamma$-set = $\{v_2, v_7\}$ exists with the $d_{om}$-sequence (3, 8). So also here we have $m(J_{11}(1)) = 3.$\\ \\
For $J_{12}(1)$ and $J_{13}(1)$ we note that the sets $\{v_1, v_3, v_8\}, \{v_1, v_3, v_9\}$ and $\{v_1, v_3, v_{10}\}$ are the $\gamma$-sets with $\{v_1, v_3, v_8\}$ the unique compact $\gamma$-set. The corresponding $d_{om}$-sequences are (1, 3, 8) and (1, 3, 9). Hence, $m(J_{12}(1)) = m(J_{13}(1)) = 1.$ Further exploratory analysis leads to the next theorem.
\begin{theorem}
For any Jaco Graph $J_n(1), n \in \Bbb N$ we have $0 \leq m(J_n(1)) \leq 3.$ The bounds are obviously sharp as well.
\end{theorem}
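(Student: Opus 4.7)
The claim $0 \leq m(J_n(1))$ is immediate from Definition 2.2 and is attained by $J_1(1)$, $J_2(1)$, and $J_3(1)$; sharpness of the upper bound is witnessed by the verifications in the paragraphs preceding the theorem, where $m(J_9(1)) = m(J_{10}(1)) = m(J_{11}(1)) = 3$. What requires proof is the uniform upper bound $m(J_n(1)) \leq 3$ for every $n \in \Bbb N$.

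My plan is to invoke Theorem 2.3, which reduces the task to producing, for each $n$, a compact $\gamma$-set $X_i$ of $J_n(1)$ with $|D_{1,i}| \leq 3$, or else $|D_{1,i}| = 4$ with the distinguished vertex (that is, the vertex $v_1$ of the relabeled $\gamma$-set realizing the first entry of the $d_{om}$-sequence) adjacent to some $v_j \in X_i$. The cases $n \leq 13$ are already settled in the preceding discussion. For $n \geq 14$ I would proceed by strong induction on $n$, assuming the bound for every $J_k(1)$ with $k < n$ and constructing a suitable compact $\gamma$-set of $J_n(1)$ from a compact $\gamma$-set of a previously-handled $J_k(1)$.

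The construction exploits the fixed leaf structure at the low-index end of every Jaco graph. The Jaco graph vertex $v_1$ has the single neighbor $v_2$, so any compact $\gamma$-set whose distinguished vertex (in the $d_{om}$-sequence sense) is $v_1$ automatically has $|D_{1,i}| \leq 2$. Similarly, if the distinguished vertex is $v_2$ (respectively $v_3$), then $|D_{1,i}| \leq 3$ (respectively $\leq 4$), since the $J_n(1)$-neighborhoods of $v_2, v_3$ are $\{v_1,v_3\}$ and $\{v_2,v_4,v_5\}$. In the $v_3$ sub-case, $v_3$ is adjacent to $v_2, v_4, v_5$, and when $|D_{1,i}| = 4$ at least one of $v_4, v_5$ typically also belongs to $X_i$, activating the second clause of Theorem 2.3 and yielding $m(J_n(1)) = 4 - 1 = 3$. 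Thus it suffices to show that for every $n$ one can select a compact $\gamma$-set whose distinguished vertex is drawn from $\{v_1, v_2, v_3\}$. This should follow from the recursive blocking of the Jaco graph: the successive dominators $v_{k_1}, v_{k_2}, \ldots$ of a compact $\gamma$-set are forced into contiguous windows $[v_{k_j+1}, v_{k_j + d^+(v_{k_j})}]$ analogous to the recursion of Theorem 2.1, while the leading block, governed entirely by the fixed neighborhoods of $v_1, v_2, v_3$, cannot grow beyond the stated bounds as $n$ increases.

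The main obstacle I anticipate lies in the boundary ranges of $n$ where the first dominator cannot be taken to be $v_1$ (because doing so would push $|X_i|$ above $\gamma(J_n(1))$, as happens for $6 \leq n \leq 11$): there the distinguished vertex is forced to $v_2$, and in rarer sub-ranges to $v_3$. Showing that $v_3$ is then adjacent to a member of $X_i$ whenever the extremal value $|D_{1,i}| = 4$ is attained requires a careful bookkeeping using the Fisher Table of $[7]$, to certify that $v_4$ or $v_5$ is forced to appear in any compact $\gamma$-set containing $v_3$. Once these exceptional ranges are isolated, the induction step should be routine, and the entire argument is closed by taking the hand-computations for $n \leq 13$ as the base cases.
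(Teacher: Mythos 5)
Your proposal assembles the right ingredients (the neighbourhoods $N(v_1)=\{v_2\}$, $N(v_2)=\{v_1,v_3\}$, $N(v_3)=\{v_2,v_4,v_5\}$, and the theorem stating $m(G)\in\{\theta,\theta-1\}$), but it stops short of a proof at exactly the decisive point. The entire burden of the argument rests on the claim that for every $n$ one can select a compact $\gamma$-set whose distinguished vertex lies in $\{v_1,v_2,v_3\}$, together with the sub-claim that in the extremal case $|D_{1,i}|=4$ one of $v_4,v_5$ ``typically'' belongs to $X_i$; both are deferred to ``careful bookkeeping using the Fisher Table'' and an induction step declared ``routine'' but never carried out. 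As written this is a plan rather than a proof: the one step that actually needs an argument is the one left open, and the word ``typically'' cannot survive in the $|D_{1,i}|=4$ branch.

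The gap is real but also avoidable, because the scaffolding you built around it (strong induction on $n$, the contiguous-window recursion, the $|D_{1,i}|=4$ branch) can be discarded entirely. Since $N[v_1]=\{v_1,v_2\}$, every dominating set of $J_n(1)$ with $n\geq 2$ must contain $v_1$ or $v_2$; whichever of these lies in a $\gamma$-set $X_j$ has its cell contained in $\{v_1,v_2\}$ or in $\{v_1,v_2,v_3\}$, hence of size at most $3$. Because $D_{1,j}$ is by definition the cell of minimum size, $|D_{1,j}|\leq 3$ for every $\gamma$-set, so $\theta\leq 3$, and the characterization $m(G)\in\{\theta,\theta-1\}$ gives $m(J_n(1))\leq 3$ with no induction and no appeal to the Fisher Table (the cases $n\leq 3$, where $\gamma=1$ and $m=0$, are trivial). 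This is essentially the paper's own argument phrased contrapositively: it assumes $m(J_n(1))\geq 4$, deduces that $v_1,v_2,v_3,v_4$ all lie outside the compact $\gamma$-set, and observes that $v_1$, whose only neighbour is $v_2$, is then not dominated at all --- a contradiction. Your worry about a distinguished vertex $v_3$ with a cell of size $4$ never materialises, precisely because the cell containing $v_1$ or $v_2$ is always smaller.
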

\begin{proof}
Following from the definition of a finite Jaco Graph $J_n(1), n \in \Bbb N$, it follows easily that the \emph{murtage number} can always be found be linking the minimum number of minimum (\emph{smallest}) indiced vertices labelled $v_i, i \in \{1, 2, 3, ....., k\}_{k < n}$ to some $v_j \in$ compact $\gamma$-set of $J_n(1).$\\ \\
Assume $m(J_n(1)) \geq 4.$ It implies that at least the vertices $v_1, v_2, v_3, v_4$ have to be linked to some vertex $v_j \in \gamma$-set, in order to reduce the value of $m(J_n(1))$ with at least 1. It also implies that $v_1, v_2, v_3, v_4 \notin$ compact  $\gamma$-set else $m(J_n(1)) \leq 3.$ Furthermore, the lowest indiced vertex $v_\ell \in$ compact $\gamma$-set is $4 <\ell = 8.$ However, the lowest indiced vertex dominated by $v_8$ is $v_5$ implying that vertices $v_1, v_2, v_3, v_4$ were not dominated, hence not adjacent to any vertex in the compact $\gamma$-set under consideration. The latter is a contradiction in terms of the definition of a $\gamma$-set (therefore, compact $\gamma$-set). So the result follows.
\end{proof}
\begin{corollary}
For any finite Jaco Graph $J_n(1), n \in \Bbb N$ we have that:\\
$ \gamma(J_n(1)) = \gamma(J_{(n - d^-(v_n) - d^-(v_{(n - d^-(v_n)}) -1)}(1)) + 1.$
\end{corollary}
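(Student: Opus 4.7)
My proposal is a direct structural argument exploiting the \emph{nested} nature of the family $\{J_n(1)\}_{n \in \Bbb N}$. Write $i := n - d^-(v_n)$ and $k := i - d^-(v_i) - 1$, so the identity to establish is $\gamma(J_n(1)) = \gamma(J_k(1)) + 1$. I would first record two facts derived straight from the axioms of Section~2. First, the in-neighbours of any $v_m$ form the consecutive block $v_{m-d^-(v_m)}, \ldots, v_{m-1}$, so $v_i$ is the smallest-indexed neighbour of $v_n$; since $v_iv_n$ is an edge, the defining inequality $2i - d^-(v_i) \geq n$ forces the out-neighbours of $v_i$ in $J_n(1)$ to be exactly $v_{i+1}, \ldots, v_n$. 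Consequently the closed neighbourhood is $N[v_i] = \{v_{k+1}, v_{k+2}, \ldots, v_n\}$, and because the adjacency rule $2a - d^-(v_a) \geq b$ depends only on the structure up to index $\max(a,b)$, the subgraph induced on $\{v_1, \ldots, v_k\}$ is exactly $J_k(1)$. Secondly, $d^-(v_{m+1}) \leq d^-(v_m)+1$, because the only candidate in-neighbour of $v_{m+1}$ not already present for $v_m$ is $v_m$ itself; iterating this, the ``downward reach'' $\ell - d^-(v_\ell)$ is non-decreasing in $\ell$.

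The upper bound is immediate: if $Y$ is any minimum dominating set of $J_k(1)$, then $Y \cup \{v_i\}$ dominates $J_n(1)$, giving $\gamma(J_n(1)) \leq \gamma(J_k(1))+1$.

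For the lower bound, take a $\gamma$-set $X$ of $J_n(1)$ and pick any $v_\ell \in X$ dominating $v_n$; necessarily $\ell \geq i$. The monotonicity statement yields $\ell - d^-(v_\ell) \geq i - d^-(v_i) = k+1$, so $N[v_\ell] \subseteq \{v_{k+1}, \ldots, v_n\}$, i.e.\ $v_\ell$ dominates no vertex of $\{v_1, \ldots, v_k\}$. Hence $X \setminus \{v_\ell\}$ must dominate $\{v_1, \ldots, v_k\}$ inside $J_n(1)$. I would then convert this into a dominating set of $J_k(1)$ of the same cardinality by replacing each $v_m \in X \setminus \{v_\ell\}$ with $m > k$ by $v_k$ (or simply deleting it if $v_k$ is already present). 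The monotonicity inequality shows that $N_{J_k(1)}[v_k]$ contains $N_{J_n(1)}[v_m] \cap \{v_1, \ldots, v_k\}$, so coverage of $\{v_1, \ldots, v_k\}$ is preserved. The resulting set is a dominating set of $J_k(1)$ of size at most $|X|-1$, yielding $\gamma(J_n(1)) \geq \gamma(J_k(1))+1$.

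The main obstacle is the verification of the previous paragraph: one must check that vertices of $X \setminus \{v_\ell\}$ lying in the intermediate range $(k, n]$ can be exchanged for $v_k$ without losing domination of $\{v_1, \ldots, v_k\}$. Once the $d^-$-monotonicity is in hand this is straightforward, but without it the replacement argument collapses. Every other step is essentially a restatement of the Jaco-graph definition, and the base cases $n \leq 3$ (where $k \leq 0$ and $J_k(1)$ is interpreted as empty with $\gamma = 0$) are immediate.
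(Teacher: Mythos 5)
Your proposal is correct, and it is substantially more complete than the argument the paper gives. The paper's proof consists only of your ``upper bound'' half: it identifies $v_\ell = v_{n-d^-(v_n)}$ as the smallest-indexed neighbour of $v_n$, observes that $N[v_\ell]$ covers $\{v_{\ell - d^-(v_\ell)},\ldots,v_n\}$, and then simply asserts that adjoining $v_\ell$ to a ($\gamma$-)set of $J_k(1)$ yields a \emph{minimum} dominating set of $J_n(1)$ --- the minimality, i.e.\ the inequality $\gamma(J_n(1)) \geq \gamma(J_k(1))+1$, is never argued. You supply exactly that missing half: the observation that $\ell - d^-(v_\ell)$ is non-decreasing in $\ell$ (from $d^-(v_{m+1}) \leq d^-(v_m)+1$) shows that any vertex of a $\gamma$-set of $J_n(1)$ that dominates $v_n$ reaches no vertex of index $\leq k$, and the same monotonicity justifies collapsing all remaining $\gamma$-set vertices of index greater than $k$ onto $v_k$ without losing coverage of $\{v_1,\ldots,v_k\}$, since $N_{J_n(1)}[v_m]\cap\{v_1,\ldots,v_k\} = \{v_{m-d^-(v_m)},\ldots,v_k\} \subseteq N_{J_k(1)}[v_k]$ for $m>k$. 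That exchange argument, together with the fact that the induced subgraph on $\{v_1,\ldots,v_k\}$ is $J_k(1)$, is precisely what turns the paper's one-directional construction into a proof of the stated equality; your version should be preferred.
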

\begin{proof}
Consider the Jaco Graph $J_n(1)$ and let vertex $v_\ell$ be the minimum indiced vertex with the edge $v_\ell v_n \in E(J_n(1)).$ Clearly all vertices $v_{k\neq \ell} \in \{v_{\ell - d^-(v_\ell)}, ..., v_n\}$ are adjacent to $ v_\ell$. Reducing by one more vertex we consider the Jaco Graph $J_{(n - d^-(v_n) - d^-(v_{(n - d^-(v_n)}) -1)}(1).$ Hence if $X_i$ is a compact $\gamma$-set of  $J_{(n - d^-(v_n) - d^-(v_{(n - d^-(v_n)}) -1)}(1),$ a compact $\gamma$-set of $J_n(1)$ is given by $X_i \cup \{v_\ell\}.$ \\ \\ 
It concludes the result that $ \gamma(J_n(1)) = \gamma(J_{(n - d^-(v_n) - d^-(v_{(n - d^-(v_n)}) -1)}(1)) + 1.$
\end{proof}
\noindent \textbf{\emph{Open access:\footnote {To be submitted to the \emph{Pioneer Journal of Mathematics and Mathematical Sciences.}}}} This paper is distributed under the terms of the Creative Commons Attribution License which permits any use, distribution and reproduction in any medium, provided the original author(s) and the source are credited. \\ \\
References (Limited) \\ \\
$[1]$ Bauer, D., Harary, F., Nieminen, J., Suffel, C., \emph{Domination alteration sets in graphs}, Discrete Mathematics, Vol 47 (1983), pp 153-161.\\
$[2]$ Bondy, J.A., Murty, U.S.R., \emph {Graph Theory with Applications,} Macmillan Press, London, (1976). \\
$[3]$ Dutton, R.D., Brigham, R.C., \emph{An extremal problem for edge domination insensitive graphs}, Discrete Applied Mathematics, Vol 20 (1988), no.2, pp 113-125.\\
$[4]$ Haynes, T.W., Henning, M.A., \emph{Changing and unchanging domination: a classification}, Discrete Mathematics, Vol 272 (2003), pp 65-79.\\
$[5]$ Haynes, T.W., Hedetniemi, S.M., Hedetniemi, S.T., \emph{Domination and independence subdivision numbers of graphs}, Discussioness Mathematicae Graph Theory, Vol 20 (2001), pp 271-280.\\ 
$[6]$ Kalayathankal, S.J., Susanth, C., \emph{The Sum and Product of Chromatic Numbers of Graphs and their Line Graphs}, arXiv: 1404.1698v1 [math.CO], 7 April 2014.\\
$[7]$ Kok, J., Fisher, P., Wilkens, B., Mabula, M., Mukungunugwa, V., \emph{Characteristics of Finite Jaco Graphs, $J_n(1), n \in \Bbb N$}, arXiv: 1404.0484v1 [math.CO], 2 April 2014.\\
$[8]$  Kok, J., Fisher, P., Wilkens, B., Mabula, M., Mukungunugwa, V., \emph{Characteristics of Jaco Graphs, $J_\infty(a), a \in \Bbb N$}, arXiv: 1404.1714v1 [math.CO], 7 April 2014. \\
$[9]$ Teschner, U., \emph{The bondage number of a graph}, Discrete Mathematics}, Vol 171 (1997), pp 249-259.
\end{document}